\newtheorem{theorem}{Theorem}[section]
\theoremstyle{definition}
\begin{document}
\date{2017-5-29}
\title[Spectral properties of noncommutative Ricci flow]{Analyticity and spectral properties of noncommutative Ricci flow in a matrix geometry}
\author{Rocco Duvenhage, Wernd van Staden and Jan Wuzyk}
\address{Department of Physics\\
University of Pretoria\\
Pretoria 0002\\
South Africa}
\email{rocco.duvenhage@up.ac.za}
\keywords{Ricci flow; noncommutative geometry; matrix geometry; spectrum of the Laplacian.}

\begin{abstract}
We study\ a first variation formula for the eigenvalues of the Laplacian
evolving under the Ricci flow in a simple example of a noncommutative matrix
geometry, namely a finite dimensional representation of a noncommutative
torus. In order to do so, we first show that the Ricci flow in this matrix
geometry is analytic.

\end{abstract}
\subjclass[2010]{Primary 58B34; Secondary 47A55, 35K55.}
\maketitle

\section{Introduction}

In \cite{D}, Ricci flow was defined and studied in a simple example of a
matrix geometry, namely in a finite dimensional representation of a
noncommutative (or quantum) 2-torus. This was motivated by \cite{BM}, which
attempts to define Ricci flow in the usual infinite dimensional representation
of the noncommutative 2-torus by using a first variation formula for the
eigenvalues of the Laplace-Beltrami operator obtained in the classical case in
\cite{dC}.

In \cite{D}, however, the Ricci flow was defined more directly by a
noncommutative version of the Ricci flow equation, with no reference to the
spectrum of the Laplace-Beltrami operator or a first variation formula. In
this paper the aim is to show that a first variation formula can in fact also
be obtained for the Ricci flow as defined in \cite{D}.

The formula is obtained in Section \ref{var}. This is actually the second of
the two main results of the paper. In order to prove it, we first need to show
that the Ricci flow in \cite{D} is analytic, which is the first of our main
results, and is obtained in Section \ref{An}.

Section \ref{Ricci} briefly reviews the noncommutative Ricci flow from
\cite{D} in preparation for Section \ref{An}. The central object in Section
\ref{var} is the noncommutative Laplace-Beltrami operator. Section
\ref{Lap-Bel} presents this operator in the finite dimensional representation
in analogy to the known Laplace-Beltrami operator in the infinite dimensional
representation of the noncommutative torus.

The results of this paper contribute to showing that various properties of
Ricci flow in classical (i.e. commutative) differential geometry can be
systematically extended to a noncommutative example, indicating that Ricci
flow can be sensibly studied in the noncommutative case. Secondly, the paper
to some extent clarifies the similarities and differences between the
approaches taken in \cite{D} and \cite{BM} respectively.

As Ricci flow is of importance in differential geometry and related areas, it
seems plausible that extensions of results on Ricci flow to the noncommutative
case can ultimately be of value in noncommutative geometry and its
applications. Keep in mind that Ricci flow originated as part of Hamilton's
programme to prove the Poincar\'{e} conjecture \cite{H}, and that this
programme was indeed later completed by Perelman in \cite{Pe1, Pe2, Pe3}. In
Friedan's work at about the same time as \cite{H}, Ricci flow essentially also
appeared as part of a low order approximation to the renormalization group
equation of nonlinear sigma models in physics; see \cite{Fr1} for the initial
paper, but for a clearer formulation see in particular \cite[Section
II.1]{Fr2}. These remarks clearly illustrate the power and range of
applications of classical Ricci flow. Refer to \cite[Section 6]{BM} for a
brief discussion of the possibility of corresponding applications in the
noncommutative case.

Since in the formulation used in \cite{D} the usual partial differential
equation in fact becomes a system of ordinary differential equations, one can
use tools from linear algebra, systems of ordinary differential equations
(including the case on a complex domain, rather than just on a real interval),
complex analysis and perturbation theory of linear operators, to obtain
results that in the infinite dimensional representation are far more difficult
to prove or are, as yet, not accessible. An example of this is the convergence
of the Ricci flow to the flat metric, shown in \cite{D} using techniques from
systems of ordinary differential equations, but which had not yet been
obtained in \cite{BM}. In this paper we can use these techniques to derive the
analyticity of the Ricci flow, and consequently of the eigenvalues and
eigenvectors. Even in the classical case in \cite{dC}, on the other hand, the
existence of sufficiently smoothly parametrized eigenvalues and eigenvectors
had to be assumed in order to obtain the first variation formula. The setting
in \cite{D} is also very concrete and should for example be amenable to
numerical methods.

One should keep in mind though that the finite dimensional representation is
in many ways far simpler than the infinite dimensional case, so we would not
expect all these methods to extend easily to solve the corresponding problems
in the latter case. Nevertheless, one can learn a lot about noncommutative
geometry by studying simple examples, in particular in this case about
noncommutative partial differential equations. It seems plausible that some of
the ideas and techniques could be extended to more general situations, in
either infinite dimensional representations or other matrix geometries. This
could include noncommutative versions of partial differential equations other
than the Ricci flow. See for example the noncommutative heat equations
subsequently studied in \cite{Li} and \cite{L2}, in the same matrix geometry
as in \cite{D}. Also, see \cite{Ro} for partial differential equations in the
infinite dimensional representation of a noncommutative torus.

To conclude this introduction, we mention that the basic ideas regarding
matrix geometries originated in \cite{Hop} and \cite{M}. Noncommutative
geometry is, however, much broader (and older) than just the matrix case; see
for example \cite{C}. It remains an active field with much work still to be
done. In particular, Ricci flow has not yet been extensively studied in the
noncommutative case. Aside from the two papers on noncommutative Ricci flow
mentioned above, we can only point to \cite{CM} and \cite{V}.

\section{The Ricci flow\label{Ricci}}

Here we review the definition of Ricci flow as studied in \cite{D}, including
notation to be used throughout the paper.

Note that in the classical case, Ricci flow is given by \cite{H}
\[
\frac{\partial g_{\mu\nu}}{\partial t}=-2R_{\mu\nu},
\]
where $g_{\mu\nu}$ is a metric on a differentiable manifold, $R_{\mu\nu}$ is
the Ricci tensor, and $t$ is a real variable (``time''). Restricting ourselves
to surfaces and to metrics of the form $g_{\mu\nu}=c\delta_{\mu\nu}$, where
$\delta_{\mu\nu}$ is the Kronecker delta and $c$ is some strictly positive
function on the surface (a conformal rescaling factor), it can be shown that
this equation becomes
\[
\frac{\partial c}{\partial t}=(\partial_{1}\partial_{1}+\partial_{2}%
\partial_{2})\log c,
\]
where $\partial_{1}$ and $\partial_{2}$ are the partial derivatives with
respect to the coordinates $x^{1}$ and $x^{2}$ on the surface. This paper
studies a noncommutative version of the latter equation.

First, we recall the matrix geometry we are going to work with. We consider
two unitary $n\times n$ matrices $u$ and $v$ generating the algebra $M_{n}$ of
$n\times n$ complex matrices, and satisfying
\[
vu=quv,
\]
where
\[
q=e^{2\pi im/n}%
\]
for an $m\in\left\{  1,2,...,n-1\right\}  $ such that $m$ and $n$ are
relatively prime. Note that $q^{n}=1$, but $q^{j}\neq1$ for $j=1,...,n-1$. We
can for example use
\[
u=\left[
\begin{array}
[c]{cccc}%
1 &  &  & \\
& q &  & \\
&  & \ddots & \\
&  &  & q^{n-1}%
\end{array}
\right]
\]
and
\[
v=\left[
\begin{array}
[c]{cccc}%
0 & 1 &  & \\
& 0 & \ddots & \\
&  & \ddots & 1\\
1 &  &  & 0
\end{array}
\right]  ,
\]
where the blank spaces are filled with zeroes. It is straightforward to check
that the commutant of the set $\{u,v\}$ only consists of scalar multiples of
the identity matrix $I$, so $u$ and $v$ indeed generate $M_{n}$. These
matrices appeared in physics at least as far back as \cite{S}, in relation to
quantum mechanics, but the geometric interpretation in terms of a
noncommutative (or ``fuzzy'') torus seems to have only come later; see for
example \cite[Section 2]{M}.

For any two Hermitian matrices $x$ and $y$ such that
\[
u=e^{\frac{2\pi i}{n}x}%
\]
and
\[
v=e^{\frac{2\pi i}{n}y},
\]
we define derivations $\delta_{1}$ and $\delta_{2}$ on $M_{n}$ by the
commutators
\[
\delta_{1}:=\left[  y,\cdot\right]
\]
and
\[
\delta_{2}:=-\left[  x,\cdot\right]  ,
\]
which are analogues of the partial derivatives $\frac{1}{i}\partial_{\mu}$ in
the classical case (please refer to \cite[Section 2]{D} for a discussion of
this). Note that such $x$ and $y$ exist but are not uniquely determined by $u$
and $v$. However, our analysis will not depend on the choices made. The
existence of $x$ is clear for the diagonal $u$ above. For $v$ above, we first
diagonalize using the Fourier transform for the group $\mathbb{Z}_{n}$, obtain
a $y$ in this basis, and then transform back to the original basis.

Note that $M_{n}$ is an involutive algebra, i.e. a $\ast$-algebra, with the
involution given by the usual Hermitian adjoint of a matrix $a$, denoted by
$a^{\ast}$. In the operator norm, $M_{n}$ is a unital C*-algebra. Although the
theory of C*-algebras will not appear in this paper, the C*-algebraic point of
view helps to connect what we do in this paper with the setting used in
\cite{BM} and \cite{CT}. Furthermore, we for the most part only need abstract
properties of the derivations (like those listed in \cite[Proposition 2.1]%
{D}), rather than the explicit definitions of $u$ and $v$, and $\delta_{1}$
and $\delta_{2}$, given above. The exceptions are one point in each of the
proofs of Theorems \ref{RA} and \ref{varSt}, where we do use the fact that the
derivations are given by commutators with some fixed matrices. To emphasize
this abstract point of view, we usually rather use the notation
\[
A=M_{n}%
\]
for our unital $\ast$-algebra. The unit of $A$ is the $n\times n$ identity
matrix $I$.

Using the derivations above, we define a noncommutative analogue of a
Laplacian as an operator on $A$:
\begin{equation}
\triangle:=\delta_{1}^{2}+\delta_{2}^{2}, \label{Lap}%
\end{equation}
i.e. $\triangle a=[y,[y,a]]+[x,[x,a]]$ for all $a\in A$.

The Hilbert-Schmidt inner product
\[
\left\langle a,b\right\rangle :=\tau\left(  a^{\ast}b\right)
\]
on $A$ then becomes relevant. Here $\tau$ denotes the usual trace on $M_{n}$,
i.e. the sum of the diagonal entries of a matrix. One can interpret
$\tau\left(  a\right)  $ as a noncommutative integral of the complex-valued
``function'' $a$, corresponding in the classical case to the integral over the
flat torus.

We denote the Hilbert space $(A,\left\langle \cdot,\cdot\right\rangle )$ by
$H$.

Note that the Laplacian
\[
\triangle:H\rightarrow H
\]
is a positive operator (see \cite[Proposition 2.1]{D}), so it corresponds to
$-(\partial_{1}\partial_{1}+\partial_{2}\partial_{2})$ in the classical case.

Now we turn to noncommutative metrics.

For $a\in A$, we write
\[
a>0
\]
if $a$ is a positive operator, i.e. if it can be written as $a=b^{\ast}b$ for
some $b\in A$, and in addition $0$ is not an eigenvalue of $a$. In other
words, $a>0$ means that $a$ is a Hermitian $n\times n$ matrix whose
eigenvalues are strictly positive. We also write
\[
P=\{a\in A:a>0\}
\]
to denote the set of all these elements.

A \emph{noncommutative metric} is any $c\in P$. This is the noncommutative
version of the metric $c\delta_{\mu\nu}$ in the classical case above. Given
any $c\in P$, we also consider the Hilbert space $H_{c}$ given by the inner
product%
\begin{equation}
\left\langle a,b\right\rangle _{c}:=\varphi\left(  a^{\ast}b\right)
\label{KrInp}%
\end{equation}
on the vector space $A$, where
\begin{equation}
\varphi(a):=\tau(ca) \label{fi}%
\end{equation}
for all $a\in A$. The positive linear functional $\varphi$ is a noncommutative
version of the integral over a curved surface (with metric given by a
conformal rescaling factor as above) in the classical case, a point which
becomes relevant in Section \ref{var}.

This setup for noncommutative metrics is adapted from the infinite dimensional
representation as studied in \cite{CT}. The Hilbert space $H_{c}$ will play a
central role in Sections \ref{Lap-Bel} and \ref{var}, where we work with the
noncommutative version of the Laplace-Beltrami operator, which is essentially
a Laplacian on a curved noncommutative space.

A noncommutative version of the classical Ricci flow equation above is the
following:
\begin{equation}
\frac{d}{dt}c(t)=-\triangle\log c(t), \label{R}%
\end{equation}
where $c(t)\in P$ denotes the metric at time $t$. Here we define $\log a$ of
an $a\in P$ by diagonalizing $a$, applying $\log$ to each of the diagonal
entries, and then returning to the original basis (this is the Borel
functional calculus in finite dimensions). In this sense we can view $\log$ as
the real logarithm applied to elements of $P$. Below, when studying the
analyticity of this Ricci flow, we also use the principal complex logarithm on
$D:=\mathbb{C}\backslash(-\infty,0]$, denoted by $\operatorname{Log}$, and
applied to the larger set of matrices whose spectra are in $D$.

\section{Analyticity of the Ricci flow\label{An}}

In \cite[Section 3]{D}, we used the theory of systems of differential
equations to show that, given initial conditions, the Ricci flow equation
(\ref{R}) has a unique solution $c$ and that this solution is $C^{1}$, i.e. it
is differentiable and its derivative is continuous. Here we go further and
obtain the first of the main results of this paper, namely that the Ricci flow
is in fact analytic. We use the theory of systems of differential equations on
a complex domain to do so. We collect these results as follows:

\begin{theorem}
\label{RA}Let $c_{t_{0}}\in P$ be any initial noncommutative metric at the
initial time $t_{0}\in\mathbb{R}$. Then the noncommutative Ricci flow, Eq.
(\ref{R}), has a unique $C^{1}$ solution $c$ on any interval $[t_{0},t_{1}]$
with $t_{1}\geq t_{0}$, and also on the interval $[t_{0},\infty)$, such that
$c(t_{0})=c_{t_{0}}$. In addition, such a solution is necessarily analytic,
i.e. at each point $t_{2}\in\lbrack t_{0},\infty)$ there is a number
$\varepsilon>0$ such that each entry in the matrix $c(t)\in P$ is a power
series in $t-t_{2}$ for all $t$ in the interval $[t_{0},\infty)\cap
(t_{2}-\varepsilon,t_{2}+\varepsilon)$.
\end{theorem}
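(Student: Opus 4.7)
The plan is to cite from \cite[Section 3]{D} the existence and uniqueness of a $C^{1}$ solution on $[t_{0},t_{1}]$ and its extension to $[t_{0},\infty)$, and then to upgrade smoothness to analyticity by extending (\ref{R}) to a holomorphic ODE on a complex-open subset of $M_{n}$, after which Cauchy's existence theorem for systems of ODEs with holomorphic right-hand side produces the required power-series expansions.

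For analyticity at an arbitrary $t_{2}\in[t_{0},\infty)$, I would proceed as follows. Set $\Omega:=\{a\in M_{n}:\sigma(a)\subset D\}$. Since the eigenvalues of a matrix depend continuously on its entries, $\Omega$ is open in $M_{n}\cong\mathbb{C}^{n^{2}}$, and $c(t_{2})\in P\subset\Omega$. On $\Omega$ the holomorphic functional calculus provides
$$\Log(a)=\frac{1}{2\pi i}\oint_{\gamma}\Log(\zeta)\,(\zeta I-a)^{-1}\,d\zeta,$$
where $\gamma$ is any positively oriented contour in $D$ enclosing $\sigma(a)$. Holomorphy of the resolvent $(\zeta I-a)^{-1}$ in $a$ on a fixed $\gamma$ shows that $\Log:\Omega\to M_{n}$ is holomorphic, and composing with the linear map $\triangle$ gives a holomorphic vector field $F(a):=-\triangle\Log(a)$ on $\Omega$ that extends the real right-hand side of (\ref{R}).

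By Cauchy's theorem for holomorphic ODEs there are $\varepsilon>0$ and a holomorphic function $\tilde{c}:\{t\in\mathbb{C}:|t-t_{2}|<\varepsilon\}\to\Omega$ with $\frac{d}{dt}\tilde{c}(t)=F(\tilde{c}(t))$ and $\tilde{c}(t_{2})=c(t_{2})$. Restricting $\tilde{c}$ to the real axis yields a $C^{1}$ solution of (\ref{R}) passing through $c(t_{2})$ at time $t_{2}$; by the uniqueness from \cite{D} it must coincide with $c$ on $[t_{0},\infty)\cap(t_{2}-\varepsilon,t_{2}+\varepsilon)$. Since each matrix entry of $\tilde{c}$ is holomorphic in $t$, each entry of $c(t)$ is therefore the corresponding Taylor series about $t_{2}$ on that interval, which is the asserted analyticity. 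The main obstacle will be establishing the holomorphy of $\Log$ on $\Omega$ cleanly enough to feed into an appropriate form of Cauchy's existence theorem; once this is in place, the identification of the real restriction of $\tilde{c}$ with $c$ is a routine application of the uniqueness already available from \cite{D}, and no further use of the specific commutator form of the derivations is required in this step beyond what \cite{D} itself invokes.
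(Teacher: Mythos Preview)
Your proposal is correct and follows essentially the same route as the paper: define $\Log$ on the open set $\Omega=B$ via the holomorphic functional calculus, argue that $a\mapsto -\triangle\Log a$ is holomorphic (the paper does this via Cramer's rule, Hartogs, and differentiation under the contour integral, while you invoke holomorphy of the resolvent and linearity of $\triangle$), apply the Cauchy existence theorem for holomorphic ODEs near each $t_{2}$, and then identify the real restriction with $c$ by uniqueness. The only point to be slightly careful about is that the uniqueness you invoke should be for the real ODE on $\Omega$ rather than merely on $P$, since a priori the restriction $\tilde{c}|_{\mathbb{R}}$ only takes values in $\Omega$; this follows immediately from the same local Lipschitz property, exactly as the paper notes.
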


\begin{proof}
We first consider the $C^{1}$ property globally, and afterwards we study
analyticity locally. Although the $C^{1}$ property was considered in \cite{D},
we again look at certain aspects carefully and in more detail here, since the
results we obtain on the way are subsequently used in proving analyticity.

We start by looking at the properties of $\operatorname{Log}$ when applied to
matrices, and viewed as a function of several complex variables. Below we
define $\operatorname{Log}a$ for all $a$ in the open set $B:=\{a\in A:$
$\sigma(a)\subset D\}$, where $\sigma(a)$ is the spectrum of $a$, and
$D=\mathbb{C}\backslash(-\infty,0]$. That $B$ is indeed open in $A$, follows
from fact that the eigenvalues of a matrix depend continuously on the matrix, 
since the roots of a polynomial depend continuously on the
coefficients of the polynomial (see for example \cite[Section I.1]{Mar}). By
using the analytic functional calculus, we set%
\begin{equation}
\operatorname{Log}a:=\frac{1}{2\pi i}\int_{\Gamma}(zI-a)^{-1}%
\operatorname{Log}zdz \label{Log}%
\end{equation}
for any positively oriented simple closed smooth contour $\Gamma$ in $D$,
surrounding $\sigma(a)$, for all $a\in B$. This definition is independent of
$\Gamma$. See for example \cite[Section III.3]{A} for further details. Note
that because of the Cauchy integral formula, for $a>0$ this definition
corresponds to the definition for $\log a$ discussed at the end of the
previous section.

The inverse $\cdot^{-1}:\operatorname{Inv}(A)\rightarrow\operatorname{Inv}%
(A):a\mapsto a^{-1}$, on the open set $\operatorname{Inv}(A)$ of invertible
matrices, is differentiable in each entry $a_{jk}\in\mathbb{C}$ of the matrix
$a$ being inverted. One can see this from the formula for the inverse obtained
from Cramer's rule. So each entry in $a^{-1}$ is an analytic complex function
of each entry of $a$ separately. Therefore, by the Hartogs theorem (see for
example \cite[Theorem 2.2.8]{Ho}), each entry in the matrix $a^{-1}$ is an
analytic function in several complex variables, the variables being the
entries of the matrix $a$. The derivatives of these analytic functions are
therefore themselves analytic, and hence continuous functions. Because of this
we can differentiate Eq. (\ref{Log}) under the integral with respect to the
entries of $a$ (refer for example to \cite[Theorem VIII.6.A3]{La}), to see
that each entry in the matrix $\operatorname{Log}a$ is (again by the Hartogs
theorem) an analytic function in several complex variables. Since $\triangle$
is given by commutators with certain matrices, we immediately also know that
the entries of the matrix $\triangle\operatorname{Log}a$ are each analytic
functions in several complex variables, the variables still being the entries
of $a$.

In particular, $a\mapsto\triangle\log a$ is a $C^{1}$ function on $P$, which
means (by the theory of systems of ordinary differential equations; see for
example \cite[Section 1.1]{Ch}) that the initial value problem for Eq.
(\ref{R}) has a unique $C^{1}$ solution on any interval for which the solution
stays in $P$. This has already been discussed in \cite[Section 3]{D}, where in
particular it was shown that the solution remains in $P$ for all $t\geq t_{0}%
$, i.e. it exists (and is necessarily unique) on $[t_{0},\infty)$.

However, we are now interested in the analyticity of this solution $c$. We
approach this problem using the theory of systems of ordinary differential
equations on a complex domain.

For any $t_{2}\geq t_{0}$, consider the system of differential equations given
in matrix form by%
\[
\frac{d}{dz}w(z)=-\triangle\operatorname{Log}w(z)
\]
where $w$ is required to be a function on some neighbourhood of $t_{2}$ in
$\mathbb{C}$, with values in the open set $B$ consisting of matrices $a\in A$
such that $\sigma(a)$ lies in $D$. I.e. the values of $w$ should be in the
domain of $\operatorname{Log}$ viewed as a matrix function as defined above.

Because of the analyticity of $a\mapsto\triangle\operatorname{Log}a$ shown
above, and the fact that it is consequently locally Lipschitz (see for example
\cite[Section 6.3]{La1}), it follows by \cite[Theorem 2.2.2]{Hi} that, on a
small enough open disc in $\mathbb{C}$ of radius $\varepsilon$ around $t_{2}$,
this system has a unique analytic solution such that $w(t_{2})=c(t_{2})$. I.e.
on this disc the entries of $w(z)$ are analytic functions, and therefore have
power series expansions on this disc.

Restricting such a solution to real elements $[t_{0},\infty)\cap
(t_{2}-\varepsilon,t_{2}+\varepsilon)$ in the disc, we necessarily obtain our
solution $c$ with values in $P$ discussed above on this interval around
$t_{2}$. The reason for this is that the system
\[
\frac{d}{dt}w(t)=-\triangle\operatorname{Log}w(t)
\]
for real $t$, has a unique solution by the theory of systems of ordinary
differential equations on a real interval, just as for the case of $c$ above.
So given the condition $w(t_{2})=c(t_{2})$, such a solution of $w$ must in
fact be the solution we already have, namely $c$, on the interval in question.
But the restriction of the solution on a complex domain to $[t_{0},\infty
)\cap(t_{2}-\varepsilon,t_{2}+\varepsilon)$ mentioned above is exactly such a
solution, hence on this interval it is indeed $c$.

This means that the entries of $c$ have power series expansions at each
$t_{2}$, and therefore $c$ is analytic.
\end{proof}

In addition, \cite[Theorem 3.2]{D} also showed the convergence of $c$ to the
flat metric (proportional to the identity matrix $I$) as $t$ goes to infinity,
as well as monotonicity of the determinant and preservation of the trace under
the flow. However, analyticity of the Ricci flow is the property of
fundamental importance in obtaining a first variation formula for the
eigenvalues of the Laplace-Beltrami operator in Section \ref{var}.

\section{The Laplace-Beltrami operator\label{Lap-Bel}}

In the classical case, one can define a Laplacian that incorporates the
metric, the so-called Laplace-Beltrami operator. For the classical metric
$g_{\mu\nu}=c\delta_{\mu\nu}$ mentioned in Section \ref{Ricci}, the
Laplace-Beltrami operator is of the form%
\[
-\frac{1}{c}(\partial_{1}\partial_{1}+\partial_{2}\partial_{2}),
\]
if we use the convention that it should be a positive operator. (Often the
minus sign is dropped, then minus the Laplace-Beltrami operator would be a
positive operator.) Note that it reduces to the Laplacian $-(\partial
_{1}\partial_{1}+\partial_{2}\partial_{2})$ for the flat metric $c=I$.

We need to write down a suitable noncommutative version of this operator,
which should have a similar form and similar properties. In particular, it
should also be a positive operator, and it should reduce to the noncommutative
Laplacian $\triangle$ for the flat metric $c=I$. A natural choice is the
operator
\begin{equation}
\triangle_{c}:H_{c}\rightarrow H_{c}:a\mapsto(\triangle a)c^{-1} \label{LB}%
\end{equation}
for any metric $c\in P$, where the product of $\triangle a$ and $c^{-1}$ is
taken in $A$. Keep in mind from Section \ref{Ricci} that $H_{c}%
=(A,\left\langle \cdot,\cdot\right\rangle _{c})$. Note that the operator
$\triangle_{c}:H_{c}\rightarrow H_{c}$ is indeed positive, since%
\[
\left\langle a,\triangle_{c}a\right\rangle _{c}=\tau(ca^{\ast}(\triangle
a)c^{-1})=\tau(a^{\ast}\triangle a)=\left\langle a,\triangle a\right\rangle
\geq0
\]
by the fact that $\triangle:H\rightarrow H$ is positive (see \cite[Proposition
2.1]{D}), where $H=H_{I}$ as in Section \ref{Ricci}. The operator
$\triangle_{c}$ also clearly reduces to $\triangle$ when $c=I$. We therefore
use (\ref{LB}) as our definition of the Laplace-Beltrami operator.

Note that the alternative definition $c^{-1}\triangle a$ for $\triangle_{c}a$,
which may at first seem to be the more obvious choice, would fail, since it
would not guarantee positivity of $\triangle_{c}$.

The right multiplication by $c^{-1}$ in the definition of the Laplace-Beltrami
operator also appears in the infinite dimensional representation of the
noncommutative 2-torus. See for example \cite[Remark 2.2]{FK}.

In the next section it will also be convenient to represent $\triangle_{c}$ on
$H$ instead of $H_{c}$. To do this we define a unitary operator%
\[
U_{c}:H_{c}\rightarrow H:a\mapsto ac^{1/2}%
\]
where the product $ac^{1/2}$ is taken in the algebra $A$. Note that this is
indeed unitary, since%
\[
\left\langle U_{c}a,U_{c}b\right\rangle =\tau((ac^{1/2})^{\ast}bc^{1/2}%
)=\tau(ca^{\ast}b)=\left\langle a,b\right\rangle _{c}%
\]
for all $a,b\in H_{c}$, and $U_{c}$ is invertible, since $c^{1/2}$ is. We can
therefore represent $\triangle_{c}$ on $H$ by the positive operator%
\[
\bar{\triangle}_{c}:=U_{c}\triangle_{c}U_{c}^{\ast}:H\rightarrow H,
\]
for which we have%
\begin{equation}
\bar{\triangle}_{c}a=(\triangle(ac^{-1/2}))c^{-1/2} \label{LBV}%
\end{equation}
for all $a\in H$.

\section{The first variation formula\label{var}}

This section presents our second main result, namely a version in our context
of the classical first variation formula obtained in \cite[Corollary 2.3]{dC}
for the eigenvalues of the time-dependent Laplace-Beltrami operator given by
the classical Ricci flow. The analyticity of the noncommutative Ricci flow,
given by Theorem \ref{RA}, will be used in proving this result. We then
discuss this first variation formula in relation to \cite{BM} and the
classical case.

Keep in mind from Section \ref{Ricci} that $\tau$ is the trace on $M_{n}$,
that $\triangle$ is the flat Laplacian given by Eq. (\ref{Lap}), and that the
Hilbert space $H_{c}$ is defined by the inner product in Eq. (\ref{KrInp}). To
make some expressions easier to read, we denote $c(t)$ also by%
\[
c_{t}:=c(t).
\]
In terms of this notation, we can formulate our second main result as follows:

\begin{theorem}
\label{varSt}Let $c$ be the Ricci flow on $[t_{0},\infty)$ as given by Theorem
\ref{RA}. Then the eigenvalues and normalized eigenvectors of $\triangle
_{c_{t}}$ can be obtained as analytic functions of $t$, and for each such
eigenvalue $\lambda_{t}$ and a corresponding normalized eigenvector $a_{t}\in
H_{c_{t}}$, we have the first variation formula
\[
\frac{d\lambda_{t}}{dt}=\lambda_{t}\tau(|a_{t}|^{2}\triangle\log c_{t}),
\]
where $|a_{t}|^{2}:=a_{t}^{\ast}a_{t}$, for all $t\in\lbrack t_{0},\infty)$.
(At $t=t_{0}$, this derivative can be viewed as the right-hand derivative.)
\end{theorem}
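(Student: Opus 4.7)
The plan is to split the proof into two parts: first establish that eigenvalues and eigenvectors can be chosen analytically along the flow, then derive the variation formula by a direct calculation using the eigenvalue equation, self-adjointness of $\triangle$, the normalization constraint, and the Ricci flow equation (\ref{R}). Since the Hilbert space $H_{c_t}$ moves with $t$, I would first transfer to the fixed space $H$ via the unitary $U_{c_t}$ from Section~\ref{Lap-Bel}, so that $\bar{\triangle}_{c_t}=U_{c_t}\triangle_{c_t}U_{c_t}^{\ast}$ is a positive (hence self-adjoint) operator on $H$ with the same eigenvalues as $\triangle_{c_t}$. Theorem~\ref{RA} gives analyticity of $t\mapsto c_t$, and since $c_t\in P$ stays strictly positive, the assignments $c_t\mapsto c_t^{\pm1/2}$ are analytic (by the analytic functional calculus with a contour surrounding the positive spectrum, as in the proof of Theorem~\ref{RA}). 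Formula~(\ref{LBV}) then makes $t\mapsto\bar{\triangle}_{c_t}$ an analytic family of self-adjoint operators on the finite-dimensional space $H$, and Rellich's theorem on analytic self-adjoint perturbations yields analytic branches $t\mapsto\lambda_t$ and $t\mapsto\bar{a}_t\in H$ with $\bar{\triangle}_{c_t}\bar{a}_t=\lambda_t\bar{a}_t$ and $\|\bar{a}_t\|_H=1$. Setting $a_t:=U_{c_t}^{\ast}\bar{a}_t=\bar{a}_t c_t^{-1/2}$ produces analytic, $H_{c_t}$-normalized eigenvectors of $\triangle_{c_t}$.

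For the formula, I would rewrite the eigenvalue equation $\triangle_{c_t}a_t=\lambda_t a_t$ as the identity $\triangle a_t=\lambda_t a_t c_t$ in $A$, pair it with $a_t$ under the unweighted trace inner product on $H$, and use cyclicity of $\tau$ together with the normalization $\tau(c_t a_t^{\ast}a_t)=1$ to obtain $\lambda_t=\tau(a_t^{\ast}\triangle a_t)$. Differentiating, self-adjointness of $\triangle$ on $H$ (Proposition~2.1 of \cite{D}) combined with the eigenvalue equation applied both to $\triangle a_t$ and to $(\triangle a_t)^{\ast}=\lambda_t c_t a_t^{\ast}$ (which uses that $\triangle$ commutes with the involution, since its defining commutators are with Hermitian matrices) and one more application of cyclicity collapses the two terms into
\[
\dot{\lambda}_t \;=\; \lambda_t\,\tau\bigl((\dot{a}_t^{\ast}a_t+a_t^{\ast}\dot{a}_t)\,c_t\bigr).
\]

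The $\dot{a}_t$ contributions can then be eliminated by differentiating the normalization $\tau(c_t a_t^{\ast}a_t)=1$, giving
\[
\tau\bigl((\dot{a}_t^{\ast}a_t+a_t^{\ast}\dot{a}_t)\,c_t\bigr) \;=\; -\tau(\dot{c}_t\,a_t^{\ast}a_t),
\]
so $\dot{\lambda}_t=-\lambda_t\,\tau(\dot{c}_t\,a_t^{\ast}a_t)$. Substituting the Ricci flow equation $\dot{c}_t=-\triangle\log c_t$ from (\ref{R}) and cyclicity once more yields
\[
\dot{\lambda}_t \;=\; \lambda_t\,\tau(|a_t|^2\,\triangle\log c_t),
\]
with $|a_t|^2=a_t^{\ast}a_t$, as claimed; the right-hand derivative statement at $t_0$ is immediate from analyticity on the complex neighbourhood of $t_0$ produced in the proof of Theorem~\ref{RA}. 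The main obstacle I expect is the first step, namely securing a joint analytic parametrization of eigenvalues and eigenvectors through possible crossings; this is precisely where self-adjointness of $\bar{\triangle}_{c_t}$ on the fixed finite-dimensional $H$, combined with analyticity of the family, forces Rellich's theorem to apply. Once this is in hand, the remainder is bookkeeping with $\tau$, the eigenvalue equation, and the Ricci flow equation, as above.
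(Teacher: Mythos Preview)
Your proposal is correct and follows essentially the same route as the paper: transfer to the fixed Hilbert space $H$ via $U_{c_t}$, use analyticity of $c_t$ and the functional calculus to make $t\mapsto\bar{\triangle}_{c_t}$ an analytic self-adjoint family, invoke Rellich/Kato to get analytic eigenvalues and eigenvectors, pull back to $H_{c_t}$, and then derive the formula from $\triangle a_t=\lambda_t a_t c_t$, self-adjointness of $\triangle$, the differentiated normalization, and the Ricci flow equation. The only cosmetic differences are that the paper explicitly verifies the symmetry condition $T(z)^{\ast}=T(\bar z)$ for the complex extension before citing Kato, and it differentiates the Rayleigh-quotient expression $\lambda_t=\tau(a_t^{\ast}\triangle a_t)/\tau(c_t a_t^{\ast}a_t)$ before specializing to the normalized case, whereas you impose normalization first and differentiate $\lambda_t=\tau(a_t^{\ast}\triangle a_t)$ directly; the resulting algebra is identical.
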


\begin{proof}
We are going to apply perturbation theory of linear operators to
$\triangle_{c(t)}$. In order to do so, we work via $\bar{\triangle}%
_{c(t)}:H\rightarrow H$ as defined in the previous section, since then we have
an operator on the same space $H$ for all $t$. In order to apply perturbation
theory (see \cite[Chap. VII, Sections 2 and 3]{K}), we show that $t\mapsto
\bar{\triangle}_{c(t)}$ can be extended to a neighbourhood of $[t_{0},\infty)$
in $\mathbb{C}$, i.e. to an open set in $\mathbb{C}$ containing $[t_{0}%
,\infty)$. More precisely, we need the extension of $t\mapsto\bar{\triangle
}_{c(t)}a$ to be analytic for all $a\in A$.

Because of Eq. (\ref{LBV}), we start by showing that $t\mapsto c(t)^{-1/2}$ is
analytic: $t\mapsto c(t)$ is analytic by Theorem \ref{RA}, and therefore at
each point $t_{1}$ in $[t_{0},\infty)$ has a power series expansion for each
of its entries which can be used to extend these entries to analytic complex
functions on a neighbourhood (a disc) $N$ of $t_{1}$ in $\mathbb{C}$, giving
us a complex matrix valued function $z\mapsto w(z)$ extending $t\mapsto c(t)$
to $N$. Secondly, the entries of $a^{-1}$ are complex analytic functions of
several complex variables (the entries of $a$), for $a$ in the set of
invertible elements Inv$(A)$ of $A$, as mentioned in the proof of Theorem
\ref{RA}. Thirdly, the entries of the square root $a^{1/2}$, for $a\in A$
whose spectrum $\sigma(a)$ lies in $D=\mathbb{C}\backslash(-\infty,0]$, are
complex analytic functions of several complex variables (the entries of $a$),
using the same argument as for $\operatorname{Log}a$ in the proof of Theorem
\ref{RA}. (Here we are using the branch of the square root given by
$e^{\frac{1}{2}\operatorname{Log}}$, and we express $a^{1/2}$ in terms of the
analytic functional calculus.) Choosing the radius of $N$ small enough, we can
ensure that $\sigma(w(z))\subset D$, so $w(z)^{-1}$ exists and $\sigma
(w(z)^{-1})\subset D$, for $z\in N$. This is because $\sigma(c(t_{1}%
))\subset(0,\infty)\subset D$, and for $N$ small enough, each element of
$\sigma(w(z))$ will be as close to some element of $\sigma(c(t_{1}))$ as we
require, simply because $w$ is continuous and $w(t_{1})=c(t_{1})$. Here we
have again used the fact that the eigenvalues of a matrix depend continuously
on the entries of the matrix, as mentioned in the proof of Theorem \ref{RA}.
Hence the composition of the above mentioned three functions, namely
$z\mapsto(w(z)^{-1})^{1/2}=w(z)^{-1/2}$ on $N$, is well-defined, and its
entries are complex differentiable (because of the above mentioned analyticity
of the three functions) and therefore analytic.

It follows that the entries of $z\mapsto\bar{\triangle}_{w(z)}a:=(\triangle
(aw(z)^{-1/2}))w(z)^{-1/2}$ are differentiable with respect to $z$, and
therefore analytic on $N$. Thus its restriction $t\mapsto\bar{\triangle
}_{c(t)}a$ to the real line is also analytic.

Because we are working in finite dimensions, the fact that $t\mapsto
\bar{\triangle}_{c(t)}a$ is analytic for all $a$, is equivalent to the entries
of $\bar{\triangle}_{c(t)}$ (viewed as an $n^{2}\times n^{2}$ matrix acting on
an $n^{2}$ dimensional space) being analytic. One can now analytically extend
it to a neighbourhood of $[t_{0},\infty)$. To do this we consider power series
expansions for the entries of the $n^{2}\times n^{2}$ matrix $\bar{\triangle
}_{c(t)}$ on a neighbourhood in $\mathbb{R}$ of each $t_{1}\in\lbrack
t_{0},\infty)$, allowing us to define an analytic extension $z\mapsto T(z)$ of
$t\mapsto\bar{\triangle}_{c(t)}$ to a disc in $\mathbb{C}$ around $t_{1}$, for
each $t_{1}\in\lbrack t_{0},\infty)$. Since the extensions on two overlapping
discs are equal on a non-empty open interval in the real line, they are equal
on the overlap because of analyticity. Hence we have an analytic extension
$z\mapsto T(z)$ of $t\mapsto\bar{\triangle}_{c(t)}$ to a neighbourhood of
$[t_{0},\infty)$, namely to the union $D_{0}$ of all these discs. Note that
$D_{0}$ is symmetric around the real axis.

To apply the results from \cite[Chap. VII, Sections 2 and 3]{K}, we
furthermore need $T(z)^{\ast}=T(\bar{z})$ to hold. To see that this is indeed
true, represent $H$ as $\mathbb{C}^{n^{2}}$ with its usual inner product, in
other words we choose some orthonormal basis in $H$. Then we can represent
$T(z)$ as an $n^{2}\times n^{2}$ matrix, such that $T(z)^{\ast}$ is simply the
usual Hermitian adjoint of $T(z)$, i.e. transpose and entrywise complex
conjugation. By analyticity, at each $t_{1}\in\lbrack t_{0},\infty)$ each
entry of $T(z)$ is a power series in $z-t_{1}$ for all $z$ in some disc $N$
centered at $t_{1}$, with radius only depending on $t_{1}$, since we can use
the smallest radius that still works for all entries. Say the power series for
the $(k,l)$ entry of $T(z)$ is given by
\[
T(z)_{kl}=\sum_{j=0}^{\infty}m_{jkl}(z-t_{1})^{j}%
\]
for all $z\in N$ and all $(k,l)$. Then in particular the $(k,l)$ entry of
$\bar{\triangle}_{c(t)}$ in the same representation is $(\bar{\triangle
}_{c(t)})_{kl}=\sum_{j=0}^{\infty}m_{jkl}(t-t_{1})^{j}$ for $t\in N\cap\lbrack
t_{0},\infty)$. But $\bar{\triangle}_{c(t)}$ is self-adjoint, since it is a
positive operator because of its definition in the previous section. It
follows that
\[
\bar{m}_{jlk}=m_{jkl},
\]
from which it in turn follows that $T(z)^{\ast}=T(\bar{z})$, as required.

By the perturbation theory of linear operators, in particular \cite[Chap. VII,
Theorem 3.9]{K}, we now conclude that the eigenvalues and normalized
eigenvectors of $\bar{\triangle}_{c(t)}:H\rightarrow H$ can be parametrized as
analytic functions of $t\in\lbrack t_{0},\infty)$. (Also see \cite[Theorem
(A)]{KMR} for a review, and \cite{Re} for the original literature.) Given such
an eigenvalue $\lambda_{t}$ and a corresponding normalized eigenvector
$\bar{a}_{t}$, it follows that $\lambda_{t}$ is also an eigenvalue of
$\triangle_{c(t)}$, and that $a_{t}:=U_{c(t)}^{\ast}\bar{a}_{t}=\bar{a}_{t}%
c_{t}^{-1/2}\in H_{c(t)}$ is a corresponding normalized eigenvector. Since
both $\bar{a}_{t}$ and $c_{t}^{-1/2}$ are analytic functions of $t$, the same
is true of $a_{t}$. So, we have transformed back to the representation on
$H_{c(t)}$, to see that the eigenvalues and normalized eigenvectors of
$\triangle_{c(t)}:H_{c(t)}\rightarrow H_{c(t)}$ can be parametrized as
analytic functions of $t\in\lbrack t_{0},\infty)$. (In \cite[Section 1]{dC} a
result of this form was assumed without proof.)

Ignoring time dependence for the moment, consider a metric $c\in P$, and let
$\lambda$ be any eigenvalue of $\triangle_{c}$, with $a\in H_{c}$ a
corresponding, but not necessarily normalized, eigenvector, i.e.
$\triangle_{c}a=\lambda a$, then we have
\[
a^{\ast}(\triangle a)c^{-1}=\lambda a^{\ast}a
\]
from which
\[
\lambda=\frac{\tau(a^{\ast}\triangle a)}{\tau(ca^{\ast}a)}%
\]
follows. Here $\tau(ca^{\ast}a)=\left\langle a,a\right\rangle _{c}=\left\|
a\right\|  _{c}^{2}\neq0$ is the norm squared of the eigenvector $a$ in the
Hilbert space $H_{c}$. (This step in our proof is closely related to the
approach taken in \cite{dC}. The rest of our proof, however, is rather
different from the proof of the classical first variation formula given in
\cite{dC}.)

Now consider any eigenvalue $\lambda_{t}$ and corresponding eigenvector
$a_{t}$ of $\triangle_{c(t)}$, both analytic in $t$, but with the eigenvector
not necessarily normalized. Denoting time derivatives by $\dot{\lambda}%
_{t}=d\lambda_{t}/dt$, and similarly for other time-dependent objects, we
obtain%
\begin{equation}
\dot{\lambda}_{t}=\frac{\tau(\dot{a}_{t}^{\ast}\triangle a_{t}+a_{t}^{\ast
}\triangle\dot{a}_{t})}{\tau(c_{t}a_{t}^{\ast}a_{t})}-\frac{\tau(a_{t}^{\ast
}\triangle a_{t})}{\tau(c_{t}a_{t}^{\ast}a_{t})^{2}}\tau(\dot{c}_{t}%
a_{t}^{\ast}a_{t}+c_{t}\dot{a}_{t}^{\ast}a_{t}+c_{t}a_{t}^{\ast}\dot{a}_{t})
\label{tafg}%
\end{equation}
where we have used the fact that $\triangle$ is defined in terms of
commutators with some fixed operators, and therefore the time derivative can
be taken over $\triangle$.

Let us now specialize to the case of a normalized eigenvector $a_{t}$, as
obtained above. Since Eq. (\ref{tafg}) holds for an eigenvector in general, it
in particular holds when it is normalized, i.e. when $\tau(c_{t}a_{t}^{\ast
}a_{t})=1$. So, using the fact that $\triangle:H\rightarrow H$ is Hermitian,
as well as the eigenvalue equation for $\triangle_{c(t)}$ in the form
$\triangle a_{t}=\lambda_{t}a_{t}c_{t}$, and lastly the Ricci flow equation,
namely Eq. (\ref{R}), we obtain%
\begin{align*}
\dot{\lambda}_{t}  &  =\tau(\dot{a}_{t}^{\ast}\triangle a_{t}+a_{t}^{\ast
}\triangle\dot{a}_{t})-\tau(a_{t}^{\ast}\triangle a_{t})\tau(\dot{c}_{t}%
a_{t}^{\ast}a_{t}+c_{t}\dot{a}_{t}^{\ast}a_{t}+c_{t}a_{t}^{\ast}\dot{a}_{t})\\
&  =\tau(\dot{a}_{t}^{\ast}\triangle a_{t}+(\triangle a_{t})^{\ast}\dot{a}%
_{t})-\lambda_{t}\tau(\dot{c}_{t}a_{t}^{\ast}a_{t}+c_{t}\dot{a}_{t}^{\ast
}a_{t}+c_{t}a_{t}^{\ast}\dot{a}_{t})\\
&  =\lambda_{t}\tau(\dot{a}_{t}^{\ast}a_{t}c_{t}+c_{t}a_{t}{}^{\ast}\dot
{a}_{t})-\lambda_{t}\tau(\dot{c}_{t}a_{t}^{\ast}a_{t}+c_{t}\dot{a}_{t}^{\ast
}a_{t}+c_{t}a_{t}^{\ast}\dot{a}_{t})\\
&  =-\lambda_{t}\tau(\dot{c}_{t}a_{t}^{\ast}a_{t})\\
&  =\lambda_{t}\tau(a_{t}^{\ast}a_{t}\triangle\log c_{t}),
\end{align*}
as required.
\end{proof}

We note that this formula can also be written as
\begin{equation}
\frac{d\lambda_{t}}{dt}=\lambda_{t}\varphi_{t}(|a_{t}|^{2}\triangle_{c_{t}%
}\log c_{t}), \label{varf}%
\end{equation}
where $\varphi_{t}(a):=\tau(c_{t}a)$ for all $a\in A$. The significance of
this is that $\triangle_{c_{t}}\log c_{t}=(\triangle\log c_{t})c_{t}^{-1}$
corresponds exactly to the formula for scalar curvature in the classical case.
Therefore Eq. (\ref{varf}) appears to be a reasonable analogue of the
classical formula
\[
\frac{d\lambda_{t}}{dt}=\lambda_{t}\int f_{t}^{2}R_{t}d\mu_{t}%
\]
obtained in \cite[Corollary 2.3]{dC}, where $R_{t}$ is the classical scalar
curvature, $f_{t}$ is an eigenfunction of the classical Laplace-Beltrami
operator at time $t$, and the integral $\int(\cdot)d\mu_{t}$ over the surface
in question corresponds to the positive linear functional $\varphi_{t}$.
However, it should also be pointed out that $(\triangle\log c_{t})c_{t}^{-1}$
is not a sensible noncommutative scalar curvature. It can for example not even
be expected to be a Hermitian element of $A$. See for example \cite[Section
5]{D} for how one can define a more sensible noncommutative scalar curvature
in this context from the noncommutative Ricci flow. Therefore the analogy
between Eq. (\ref{varf}) and the classical case is not perfect, since Eq.
(\ref{varf}) is not in terms of scalar curvature, but rather in terms of a
noncommutative object having a form similar to the classical scalar curvature.

This result also indicates some similarity between our setting for Ricci flow,
and that of \cite{BM}, where the first variation formula is used as the basis
for the noncommutative Ricci flow. However, there a much more complicated
object is used in the place of the classical $R$; see in particular
\cite[Theorem 3.5]{BM}.

In \cite{dC} the existence of eigenvalues and eigenvectors which are
sufficiently smooth (namely $C^{1}$) in $t$ is assumed, rather than proved. It
should be added, though, that in that paper classical manifolds more general
than just surfaces are considered, so such assumptions may be unavoidable
there. In \cite[Section 1]{dC}, it is also mentioned that for the classical
Ricci flow one can not in general expect analyticity in $t$.

In \cite{BM} it appears that an analogous assumption of sufficient smoothness
is made in the case of the noncommutative torus (in the infinite dimensional
representation). It is however not clear how the time-dependent eigenvectors
in \cite{BM} arise in the first place, since there the Ricci flow is defined
and studied by expressing the time-derivative of an eigenvalue as a type of
first variation formula involving the corresponding eigenvector, without
further equations from which the time-dependence of the eigenvector can be obtained.

We now conclude with further general remarks on Theorem \ref{varSt} in
relation to the classical case:

In the classical case, one can work with real-valued eigenfunctions $f$, which
in the noncommutative case correspond to Hermitian elements of $A$. However,
noncommutativity appears to prohibit restricting ourselves to Hermitian
elements of $A$ as eigenvectors. The reason for this is that if we split a not
necessarily Hermitian eigenvector $a$ of $\triangle_{c}$ into its Hermitian
parts, i.e. $a=a_{1}+ia_{2}$, with $a_{1},a_{2}\in A$ Hermitian, the
noncommutativity makes it impossible to show that $a_{1}$ and $a_{2}$ (or
whichever of them are not zero) are eigenvectors, since $(\triangle
_{c}a)^{\ast}=c^{-1}\triangle a_{1}-ic^{-1}\triangle a_{2}$, which need not be
equal to $\triangle_{c}a_{1}-i\triangle_{c}a_{2}$. This is unlike the
classical case, where the real and imaginary parts (at least whichever of them
are not zero) of an eigenfunction are in fact themselves eigenfunctions, as is
easily verified. This is why we have to work with $|a_{t}|^{2}$ rather than
$a_{t}^{2}$ in the place of $f_{t}^{2}$.

In the proof of the classical first variation formula in \cite{dC}, the
eigenfunctions (other than $1$) of the Laplace-Beltrami operator for a given
metric average to zero, i.e.
\[
\int fd\mu=0,
\]
where $\int(\cdot)d\mu$ is the integral over the manifold. We did not
explicitly use the noncommutative version of this fact in the proof above, but
it is interesting to note that it is indeed true. To see this, consider any
eigenvalue $\lambda$ and corresponding eigenvector $a\in H_{c}$ of
$\triangle_{c}$, for any metric $c\in P$. Then $\triangle a=\lambda ac$, from
which (see Eq. (\ref{fi})) we have $\lambda\varphi(a)=\tau(\triangle a)=0$.
When $\lambda=0$, we necessarily have that $a$ is proportional to $I$, since
$\ker\triangle=\mathbb{C}I$ by \cite[Proposition 2.1]{D}, showing that%
\[
\ker\triangle_{c}=\mathbb{C}I
\]
as well. On the other hand, for all other eigenvectors $a$, i.e. those not
proportional to $I$, we have $\lambda\neq0$, so
\[
\varphi(a)=0
\]
in exact analogy to the classical case.

\section*{Acknowledgments}

This work was supported by the National Research Foundation of South Africa.

\end{document}